\newenvironment{MSC2010}{\begin{@abssec}{MSC (2010)}}{\end{@abssec}}
\newcommand{\be}{\begin{equation}}
\newcommand{\ee}{\end{equation}}
\newcommand{\bea}{\begin{eqnarray}}
\newcommand{\eea}{\end{eqnarray}}
\newcommand{\beas}{\begin{eqnarray*}}
\newcommand{\eeas}{\end{eqnarray*}}
\newcommand{\vertiii}[1]{{\left\vert\kern-0.25ex\left\vert\kern-0.25ex\left\vert #1 
    \right\vert\kern-0.25ex\right\vert\kern-0.25ex\right\vert}}
\begin{document}
\title{A second-order time-stepping scheme for simulating ensembles of parameterized flow problems}

\author{
Max Gunzburger\thanks{Department of Scientific Computing,  
Florida State University,
Tallahassee, FL 32306-4120, {\tt mgunzburger@fsu.edu}. Research supported by the U.S. Department of Energy grant DE-SC0009324, U.S. Air Force Office of Scientific Research grant FA9550-15-1-0001, and by a Defense Advanced Projects Agency contract administered under the Oak Ridge National Laboratory subcontract 4000145366. }
\and Nan Jiang\thanks{Department of Mathematics and Statistics,  
Missouri University of Science and Technology,
Rolla, MO 65409-0020, {\tt jiangn@mst.edu}. }
\and Zhu Wang \thanks{Department of Mathematics, 
University of South Carolina, Columbia, SC 29208, {\tt wangzhu@math.sc.edu}. 
Research supported by the U.S. Department of Energy grant DE-SC0016540, and the U.S. National Science Foundation grant DMS-1522672.
}
}
\maketitle

\begin{abstract}
We consider settings for which one needs to perform multiple flow simulations based on the Navier-Stokes equations, each having different values for the physical parameters and/or different initial condition data, boundary conditions data, and/or forcing functions. For such settings, we propose a second-order time accurate ensemble-based method that to simulate the whole set of solutions, requires, at each time step, the solution of only a single linear system with multiple right-hand-side vectors. Rigorous analyses are given proving the conditional stability and error estimates for the proposed algorithm. 
Numerical experiments are provided that illustrate the analyses. 
\end{abstract}

\begin{keywords}
Navier-Stokes equations, parameterized flow, ensemble method
\end{keywords}

\begin{MSC2010}
65M60, 76D05
\end{MSC2010}
\section{Introduction}

Many computational fluid dynamics applications require multiple simulations of a flow under different input conditions. For example, the ensemble Kalman filter approach used in data assimilation first simulates a forward model a large number of times by perturbing either the initial condition data, boundary condition data, or uncertain parameters, then corrects the model based on the model forecasts and observational data. A second example is the construction of low-dimensional surrogates for solutions of partial differential equation (PDE) such as sparse-grid interpolants or proper orthogonal decomposition approximations for which one has to first obtain expensive approximations of solutions corresponding to several parameter samples.  Another example is sensitivity analyses of solutions for which one often has to determine approximate solutions for a number of perturbed inputs such as the values of certain physical parameters. In this paper, we consider such applications and develop a second-order time-stepping scheme for efficiently simulating an ensemble of flows. In particular, we consider the setting in which one wishes to determine the PDE solutions for several different values of the physical parameters and with several different choices of initial condition and boundary condition data and forcing functions appearing in the PDE model. 
 
The ensemble algorithm we use was first developed in \cite{JL14} to find a set of $J$ solutions of the Navier-Stokes equations (NSE) subject to different initial condition and forcing functions. The main idea is that, based on the introduction of an ensemble average and a special semi-implicit time discretization, the discrete systems for the multiple flow simulations share a common coefficient matrix. Thus, instead of solving $J$ linear system with $J$ right-hand sides (RHS), one only need solve one linear system with $J$ RHS. This leads to great computational saving in linear solvers when either the $LU$ factorization (for small-scale systems) or a block iterative algorithm (for large-scale systems) is used. High-order ensemble algorithms were designed in \cite{J15,J16}. For high Reynolds number flows, ensemble regularization methods and a turbulence model based on ensemble averaging have been developed in \cite{J15,JL15,TNW16,JKL15}. The method has also been extended to simulate MHD flows in \cite{MR16} and to develop ensemble-based reduced-order modeling techniques in \cite{GJS16a,GJS16b}.  In \cite{GJW17}, the authors proposed a first-order ensemble algorithm that deals with a number of flow simulations subject to not only different initial condition, boundary conditions, and/or body force data, but also distinct viscosity coefficients appearing in the NSE model. In this paper, we follow the same direction and develop an ensemble scheme with higher accuracy. 

To begin, consider an ensemble of incompressible flow simulations on a bounded domain subject to Dirichlet boundary conditions. The $j$-th member of the ensemble is a simulation associated with the positive viscosity coefficient $\nu_j$, initial condition data $u_{j}^{0}$, boundary condition data $g_j$, and body force $f_{j}$. Any an all of this data may vary from one simulation to another. Then, for $j=1,...,J$, we need to solve 
\begin{equation}
\label{eq:NSE}
\begin{array}{rcll}
u_{j,t}+u_{j}\cdot\nabla u_{j}-\nu_j\triangle u_{j}+\nabla p_{j}  &=& f_{j}(x,t) \quad &\text{ in }\Omega\times [0, \infty) \text{,}\\
\nabla\cdot u_{j}  &=& 0 \quad &\text{ in }\Omega \times [0, \infty) \text{,}\\
u_{j}  &=& g_j(x, t) \quad &\text{ on }\partial\Omega\text{,}\\
u_{j}(x,0)  &=& u_{j}^{0}(x) \quad & \text{ in }\Omega\text{.}
\end{array}
\end{equation}
Because of the nonlinear convection term in the model, either implicit or semi-implicit schemes are always preferred for time discretizations to avoid stability issues. At best, at each time step, for each $j$ a different linear system has to be solved; solving $J$ linear systems per time step requires a huge computational effort. Hence, we propose a new, second-order accurate in time, and more efficient numerical scheme that improves the efficiency by employing a single coefficient matrix for all the ensemble members.

To keep the exposition simple, we consider a uniform time step $\Delta t$ and let $t_n=n\Delta t$ for $n=0,1,\ldots$. We then consider the semi-discrete in time ensemble of systems 
\begin{equation}
\label{Second-Order}
\begin{aligned}
\frac{3u_{j}^{n+1}-4u_{j}^{n}+u_j^{n-1}}{2\Delta t}+\overline{u}^{n}\cdot\nabla &u_{j}^{n+1}
+u_j^{\prime\, n}\cdot\nabla (2u_{j}^{n}-u_j^{n-1}) +\nabla p_{j}^{n+1}\\
&-\overline{\nu}\Delta u_{j}^{n+1}-\left(\nu_j-\overline{\nu}\right)\Delta (2u_{j}^{n}-u_j^{n-1})=f_{j}^{n+1}\text{, }\\
\nabla\cdot u_{j}^{n+1}=0,\hspace{1.5cm}&
\end{aligned}
\end{equation}
where $u_j^{n}$, $p_j^n$ and $f_j^n$ denote approximations of $u_j(\cdot,t_n)$, $p_j(\cdot,t_n)$ and $f_j(\cdot,t_n)$ of \eqref{eq:NSE}, respectively, at time $t_n$. In \eqref{Second-Order}, $\overline{u}^{n}$ and $\overline{\nu}$ denote the ensemble mean of the velocity field and viscosity coefficient, respectively, defined by
\[
\overline{u}^{n}:=\frac{1}{J}\sum_{j=1}^{J} \left(2u_{j}^{n}-u_j^{n-1}\right) \qquad \text{and}\qquad \overline{\nu}:=\frac{1}{J}\sum_{j=1}^{J}\nu_{j}
\]
and $u_j^{\prime\, n}$ represents the fluctuation defined by 
\[
u_j^{\prime\, n}=2u_j^n-u_j^{n-1}-\overline{u}^n.
\]
It is easy to see that the coefficient matrix in a spatial discretization of \eqref{Second-Order} does not depend on $j$. Thus, all the members in the ensemble share a common coefficient matrix. To advance one time step, one only need solve a single linear system with $J$ RHS vectors, which is more efficient than solving $J$ individual simulations. 

In what follows, we present a rigorous theoretical analysis of the stability and second-order accuracy of the scheme. In Section \ref{sec:not}, we provide some notations and preliminaries; in Section \ref{sec:sta}, the stability conditions of the scheme are obtained; and in Section \ref{sec:err}, an error estimate is derived. Then, several numerical experiments are presented in Section \ref{sec:num}.  

Note that the inhomogeneity of boundary conditions will not pose any difficulty to the numerical analysis, since we can homogenize it and, thus, consider the NSE solutions with the homogeneous Dirichlet boundary conditions and a new body force (see Section 5.4 in \cite{BS08} for Poisson's equation, while a time-dependent problem can be treated analogously). Therefore, to simplify the presentation, we assume flow boundary conditions to be homogeneous ($g_j= 0$) in the following derivation and analysis of the proposed ensemble algorithm. But the argument can be naturally extended to the inhomogeneous cases. Furthermore, the flow boundary conditons are inhomogeneous in our first numerical experiment presented in Section \ref{sec:num}.

\section{Notation and preliminaries\label{sec:not}}

Let $\Omega$ be an open, regular domain in $\mathbb{R}^{d}$ $(d=2 \text{ or }%
3)$. 
The space $L^{2}(\Omega)$ is equipped with the norm $\|\cdot\|$ and inner product
$(\cdot, \cdot)$. 
Denote by $\|\cdot\|_{L^{p}}$ and $\|\cdot\|_{W_{p}^{k}}$, respectively, the norms for $L^{p}(\Omega)$ and the Sobolev space $W^{k}_{p}(\Omega)$. 
Let $H^{k}(\Omega)$ be the Sobolev space $W_{2}^{k}(\Omega)$ equipped with the 
norm $\|\cdot\|_{k}$. For functions $v(x,t)$ defined on $(0,T)$, we define 
$(1\leq m<\infty)$
\[
\| v \|_{\infty,k} \text{ }:=EssSup_{[0,T]}\| v(t,\cdot)\|_{k} \text{ \,\,\,\, and
  \,\,\,\,}\|v\|_{m,k} \text{ }:= \left(  \int_{0}^{T}\|v(t,\cdot)\|_{k}^{m}\, dt\right)
^{1/m} \text{ .}%
\]
Given a time step $\Delta t$, let $v^n= v(t_n)$ and define the  discrete norms 
\[
\vertiii{v}_{\infty,k}=\max\limits_{0\leq n\leq N}\Vert v^{n}\Vert_{k} \text{ \,\,\,\,  and 
 \,\,\,\, }
\vertiii{v}_{m,k}:= \left(\sum_{n=0}^{N}\|v^{n}\|_{k}^{m}\Delta t\right)^{1/m}.
\]
Denote by $H^{-k}(\Omega)$ the dual space of bounded linear functions on
$H^{k}_{0}(\Omega)$. A norm for $H^{-1}(\Omega)$  is given by
\[
\|f\|_{-1}=\sup_{0\neq v\in H^{1}_{0}(\Omega)}\frac{(f,v)}{\Vert\nabla v\Vert}
\text{ .}
\]
We choose the velocity space $X$ and pressure space $Q$ to be 
\[
X:=(H_{0}^{1}(\Omega))^{d} \text{ \,\,\,\,  and 
 \,\,\,\, } Q:=L_{0}^{2}(\Omega).
\]
The space of weakly divergence free functions is then
\[
V\text{ }:=\{v\in X:\,\, (\nabla\cdot v,q)=0\text{ , }\forall\, q\in Q\}.
\]

A weak formulation of (\ref{eq:NSE}) reads: find $u_j:\,[0,T]\rightarrow X$ and 
$p_j:\, [0,T]\rightarrow Q$ for a.e. $t\in(0,T]$ satisfying, for $j=1, ..., J$,
\begin{equation}
\label{eq:weak}
\begin{aligned}
(u_{j,t},v)+(u_{j}\cdot\nabla u_{j},v)+\nu_j(\nabla u_{j},\nabla v)-(p_{j}%
,\nabla\cdot v)  &= (f_{j},v), &\forall\, v\in X, \\
(\nabla\cdot u_{j},q)  &= 0,   &\forall\, q\in Q
\end{aligned}
\end{equation}
with $u_{j}(x,0)=u_{j}^{0}(x)$.

For the spatial discretization, we use a finite element (FE) method. 
However, the results can be extended to many other variational methods without much difficulty. 
Denote by $X_{h}\subset X$ and $Q_{h}\subset Q$ the conforming velocity and pressure FE spaces on an edge to edge triangulation of $\Omega$ with $h$ denoting the maximum diameter of triangles. 
Assume that the pair of spaces $(X_h,Q_h)$ satisfy the discrete inf-sup (or $LBB_h$) condition that is required to guarantee the stability of FE approximations. 
We also assume that the FE spaces satisfy the following approximation properties \cite{Layton08}: 
\begin{align}
\inf_{v_h\in X_h}\| v- v_h \|&\leq C h^{k+1}\Vert u \Vert_{k+1}	   &\forall\, v\in [H^{k+1}(\Omega)]^d\,, \label{Interp1}\\
\inf_{v_h\in X_h}\| \nabla ( v- v_h )\|&\leq C h^k \Vert v\Vert_{k+1}&\forall\, v\in [H^{k+1}(\Omega)]^d\,, \label{interp2}\\
\inf_{q_h\in Q_h}\| q- q_h \|&\leq C h^{s+1}\Vert p\Vert_{s+1}	   &\forall\, q\in H^{s+1}(\Omega)\,,      \label{interp3}
\end{align}
where the generic constant $C>0$ is independent of mesh size $h$. 
One example for which the $LBB_h$ stability condition is satisfied is the family of Taylor-Hood $P^{s+1}$-$P^{s}$ element pairs, for $s\geq 1$ \cite{Max89}. 
The discrete divergence free subspace of $X_{h}$ is
\[
V_{h}\text{ }:=\{v_{h}\in X_{h}:\,(\nabla\cdot v_{h},q_{h})=0\text{ , }\forall\,
q_{h}\in Q_{h}\}.
\]
We assume the mesh and FE spaces satisfy the following standard inverse inequality (typical for locally quasi-uniform meshes and standard FEM spaces, see, e.g., \cite{BS08}): for all $v_{h}\in X_{h}$,%
\begin{align}
h\Vert\nabla v_{h}\Vert &  \leq C_{(inv)}\Vert v_{h}\Vert. \label{inverse}
\end{align}
Define the  explicitly skew symmetric trilinear form
\[
b^{\ast}(u,v,w):=\frac{1}{2}(u\cdot\nabla v,w)-\frac{1}{2}(u\cdot\nabla w,v), 
\]
which satisfies the bounds (\cite{Layton08})
\begin{gather}
b^{\ast}(u,v,w)\leq C(\Omega) \left(\Vert \nabla u\Vert\Vert u\Vert\right)^{1/2}\Vert\nabla v\Vert\Vert\nabla
w \Vert\text{, }\quad \forall\, u, v, w \in X ,\label{In1}\\
b^{\ast}(u,v,w)\leq C(\Omega) \Vert \nabla u\Vert\Vert\nabla v\Vert\left(\Vert\nabla
w \Vert\Vert w\Vert\right)^{1/2},\quad \forall\, u, v, w \in X, \label{In2}
\end{gather}
where $C$ is a constant depending on the domain.
Denote the exact solution and FE approximate solution at $t=t_n$ to be $u_{j}^{n}$ and $u_{j, h}^{n}$, respectively.

The fully discrete finite element discretization of (\ref{Second-Order}) at $t_{n+1}$ is as follows: given
$u_{j,h}^{n}$,  find $u_{j,h}^{n+1}\in X_{h}$ and $p_{j,h}^{n+1}\in Q_{h}$ satisfying
\begin{equation}
\label{Second-Order-h}
\begin{aligned}
&\hspace{-.5cm}\Big(\frac{3u_{j,h}^{n+1}-4u_{j,h}^{n}+u_{j,h}^{n-1}}{2\Delta t},v_{h}\Big)+b^{\ast}(\overline{u}_{h}^{n},u_{j,h}^{n+1},v_{h})\\
&+b^{\ast}(2u_{j,h}^{n}-u_{j,h}^{n-1}-\overline{u}_{h}^{n},2u_{j,h}^{n}-u_{j,h}^{n-1}, v_{h})
-(p_{j,h}^{n+1},\nabla\cdot v_{h})\\
&+\overline{\nu}(\nabla u_{j,h}^{n+1},\nabla
v_{h})+\left(\nu_j-\overline{\nu}\right)(\nabla (2u_{j,h}^{n}-u_{j,h}^{n-1}),\nabla
v_{h})=(f_{j}^{n+1},v_{h})\text{, } \quad\forall\, v_{h}\in X_{h},\\
&\big(\nabla\cdot u_{j,h}^{n+1},q_{h}\big)=0, \hspace{7.8cm} \forall\, q_{h}\in Q_{h}.
\end{aligned}
\end{equation}

This is a two-step method, which needs $u_{j, h}^0$ and $u_{j, h}^1$ to start the time integration; 
$u_{j, h}^0$ is determined by the initial condition, and $u_{j, h}^1$ can be computed by the first-order ensemble algorithm developed by the authors in \cite{GJW17} or by using the usual, non-ensemble time stepping methods to compute each individual simulation at the very first time step. 

\section{Stability Analysis\label{sec:sta}}

We begin by proving the conditional, nonlinear, long time stability  
of (\ref{Second-Order-h}) under conditions on the time step and parameter deviation: 
for any $j= 1, \ldots, J$, there exists $0\leq\mu<1$ and $0< \epsilon\leq 2-2\sqrt{\mu}$ such that 
\begin{align}
C\frac{\Delta t}{\overline{\nu} h}\left\Vert\nabla u_{j,h}^{\prime\, n}\right\Vert^{2}
&\leq \frac{(2-2\sqrt{\mu}-\epsilon)\sqrt{\mu}}{2(\sqrt{\mu}+\epsilon)} 
\quad \text{and}
 \label{ineq:CFL-h1}
 \\
 \frac{|\nu_j -\overline{\nu}| }{\overline{\nu}} &\leq \frac{\sqrt{\mu}}{3},
 \label{ineq:CFL-h2}
\end{align}
where $C$ denotes a generic constant depending on the domain and the minimum angle of the mesh.

\begin{theorem}[Stability] 
\label{th:stability}
The ensemble scheme (\ref{Second-Order-h}) is stable provided the conditions (\ref{ineq:CFL-h1})-(\ref{ineq:CFL-h2}) hold. 
In particular, for $j= 1, \ldots, J$ and for any $N\geq 2$, we have 
%
\begin{equation}
\label{stability_result}
\begin{split}
&\frac{1}{4}\left(\Vert u_{j,h}^{N}\Vert^{2}+\Vert 2u_{j,h}^{N}-u_{j,h}^{N-1}\Vert^2\right)+\frac{1}{8}\sum_{n=1}^{N-1}\Vert u_{j,h}^{n+1}-2u_{j,h}^{n}+u_{j,h}^{n-1}\Vert^{2}\\
&\,\,+\overline{\nu}\Delta t  \frac{\sqrt{\mu}+\epsilon}{2-\sqrt{\mu}}\left(\frac{\sqrt{\mu}}{2}\frac{2+\epsilon}{\sqrt{\mu}+\epsilon}-\frac{3\vert \nu_j-\overline{\nu}\vert}{2 \overline{\nu}}\right)
\|\nabla u_{j,h}^{N}\|^{2}\\
&\leq\sum_{n=1}^{N-1}\frac{\sqrt{\mu}+\epsilon}{2\epsilon(2-\sqrt{\mu})}\frac{\Delta t}{\overline{\nu}}\|f_{j}^{n+1}\|_{-1}^{2}+ \frac{1}{4}\left(\Vert u_{j,h}^{1}\Vert^{2}+\Vert 2u_{j,h}^{1}-u_{j,h}^{0}\Vert^2\right)\\
&\,\,+\overline{\nu}\Delta t  \frac{\sqrt{\mu}+\epsilon}{2-\sqrt{\mu}}\left(\frac{\sqrt{\mu}}{2}\frac{2+\epsilon}{\sqrt{\mu}+\epsilon}-\frac{3\vert \nu_j-\overline{\nu}\vert}{2 \overline{\nu}}\right)\|\nabla u_{j,h}^{1}\|^{2}.
\end{split}
\end{equation}
\end{theorem}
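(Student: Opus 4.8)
The plan is to run a discrete energy argument in the $G$-stability framework for BDF2. First I would test \eqref{Second-Order-h} with $v_h=u_{j,h}^{n+1}$; since $u_{j,h}^{n+1}\in V_h$ and the scheme enforces $(\nabla\cdot u_{j,h}^{n+1},q_h)=0$, the pressure term drops out. For the discrete time derivative I would use the BDF2 polarization identity (applied with $a=u_{j,h}^{n+1}$, $b=u_{j,h}^{n}$, $c=u_{j,h}^{n-1}$)
\[
\Big(\tfrac{3a-4b+c}{2},\,a\Big)=\tfrac14\Big(\|a\|^2+\|2a-b\|^2-\|b\|^2-\|2b-c\|^2+\|a-2b+c\|^2\Big),
\]
which produces exactly the energy quantities $\tfrac14(\|u_{j,h}^{N}\|^2+\|2u_{j,h}^{N}-u_{j,h}^{N-1}\|^2)$ and the second-difference term appearing in \eqref{stability_result}. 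The first convection term vanishes because $b^{\ast}(\overline{u}_h^{n},u_{j,h}^{n+1},u_{j,h}^{n+1})=0$ by the skew symmetry of $b^{\ast}$.

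The key nonlinear estimate is for the fluctuation term. Writing $U^n:=2u_{j,h}^{n}-u_{j,h}^{n-1}$ and $w^n:=u_{j,h}^{n+1}-2u_{j,h}^{n}+u_{j,h}^{n-1}$, skew symmetry gives $b^{\ast}(u_{j,h}^{\prime\,n},U^n,U^n)=0$, so the fluctuation term collapses to $b^{\ast}(u_{j,h}^{\prime\,n},U^n,w^n)$. I would bound this using \eqref{In2}, then apply the inverse inequality \eqref{inverse} to trade $\|\nabla w^n\|$ for $h^{-1}\|w^n\|$, and finally Young's inequality, splitting off $\tfrac18\|w^n\|^2$ (the half of the second-difference term retained on the left of \eqref{stability_result}) and leaving a remainder of the form $\big(C\tfrac{\Delta t}{\overline{\nu}h}\|\nabla u_{j,h}^{\prime\,n}\|^2\big)\,\overline{\nu}\|\nabla U^n\|^2$. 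Condition \eqref{ineq:CFL-h1} is precisely what bounds the bracket, turning this into a controlled fraction of $\overline{\nu}\|\nabla U^n\|^2$.

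The remaining linear terms are routine. The variable-viscosity term $(\nu_j-\overline{\nu})(\nabla U^n,\nabla u_{j,h}^{n+1})$ is handled by Cauchy--Schwarz and Young, producing contributions proportional to $|\nu_j-\overline{\nu}|\,\|\nabla U^n\|^2$ and $|\nu_j-\overline{\nu}|\,\|\nabla u_{j,h}^{n+1}\|^2$; the forcing is bounded via $(f_j^{n+1},u_{j,h}^{n+1})\le\|f_j^{n+1}\|_{-1}\|\nabla u_{j,h}^{n+1}\|$ and Young's inequality with a weight proportional to $\epsilon\overline{\nu}$, which yields the $\tfrac{\Delta t}{\overline{\nu}}\|f_j^{n+1}\|_{-1}^2$ source with the common prefactor $\tfrac{\sqrt{\mu}+\epsilon}{2\epsilon(2-\sqrt{\mu})}$ displayed in \eqref{stability_result}. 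Collecting all contributions, summing over $n=1,\dots,N-1$, and multiplying through by $\Delta t$ telescopes the BDF2 energy part to the boundary values at $n=N$ and $n=1$.

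The hard part will be the bookkeeping of the gradient terms. After the above steps the net viscous contribution at each step is the genuine dissipation $\overline{\nu}\Delta t\|\nabla u_{j,h}^{n+1}\|^2$ minus destabilizing losses proportional to $\overline{\nu}\Delta t\|\nabla U^n\|^2$ coming from the nonlinearity and the variable-viscosity term; these carry mismatched time indices, so the crux is to show that, once summed, the dissipation dominates the losses and telescopes down to the single boundary coefficient $\overline{\nu}\Delta t\,\tfrac{\sqrt{\mu}+\epsilon}{2-\sqrt{\mu}}\big(\tfrac{\sqrt{\mu}}{2}\tfrac{2+\epsilon}{\sqrt{\mu}+\epsilon}-\tfrac{3|\nu_j-\overline{\nu}|}{2\overline{\nu}}\big)\|\nabla u_{j,h}^{N}\|^2$ (and its counterpart at $n=1$). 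This is where $\mu$ and $\epsilon$ must be chosen precisely: the restriction $\epsilon\le 2-2\sqrt{\mu}$ together with \eqref{ineq:CFL-h1} absorbs the nonlinear loss, while \eqref{ineq:CFL-h2} is exactly what keeps the final gradient coefficient nonnegative. I expect verifying that every intermediate coefficient stays nonnegative under \eqref{ineq:CFL-h1}--\eqref{ineq:CFL-h2}, and that the weighted telescoping of the gradient terms closes with precisely the stated constants, to be the most delicate and error-prone portion of the argument.
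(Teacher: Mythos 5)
Your overall architecture is the same as the paper's: test with $v_h=u_{j,h}^{n+1}$, apply the BDF2 polarization identity, kill the first convection term by skew symmetry, treat the viscosity-deviation and forcing terms with weighted Young inequalities (your claimed forcing prefactor $\frac{\sqrt{\mu}+\epsilon}{2\epsilon(2-\sqrt{\mu})}$ is exactly $1/\alpha$ for the paper's choice $\alpha=\frac{2\epsilon(2-\sqrt{\mu})}{\sqrt{\mu}+\epsilon}$), and close with a $\sigma$-weighted telescoping with $\sigma=\frac{\sqrt{\mu}+\epsilon}{2-\sqrt{\mu}}$. However, there is a genuine gap in your treatment of the fluctuation term, precisely at the spot you flag as ``delicate.'' Writing $U^n=2u_{j,h}^n-u_{j,h}^{n-1}$ and $w^n=u_{j,h}^{n+1}-U^n$, you reduce the term to $b^{\ast}(u_{j,h}^{\prime\,n},U^n,w^n)$, so after \eqref{In2}, the inverse inequality \eqref{inverse}, and Young, your remainder is $C\frac{\Delta t^2}{h}\Vert\nabla u_{j,h}^{\prime\,n}\Vert^2\Vert\nabla U^n\Vert^2$. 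The paper instead inserts $u_{j,h}^{n+1}$ in the \emph{middle} slot, $b^{\ast}(u_{j,h}^{\prime\,n},U^n,u_{j,h}^{n+1})=b^{\ast}(u_{j,h}^{\prime\,n},U^n-u_{j,h}^{n+1},u_{j,h}^{n+1})$, and (via skew symmetry) bounds it so the full-strength gradient factor in \eqref{In2} is $\Vert\nabla u_{j,h}^{n+1}\Vert$ and the half-power factors fall on $w^n$; the remainder is then $C\frac{\Delta t^2}{h}\Vert\nabla u_{j,h}^{\prime\,n}\Vert^2\Vert\nabla u_{j,h}^{n+1}\Vert^2$, sitting at the \emph{current} time level, where it is absorbed into the $(1-\sigma)$ fraction of the dissipation exactly by condition \eqref{ineq:CFL-h1}.

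Your indexing cannot close under the stated hypotheses. The loss $\Vert\nabla U^n\Vert^2\le 8\Vert\nabla u_{j,h}^n\Vert^2+2\Vert\nabla u_{j,h}^{n-1}\Vert^2$ must be absorbed at levels $n$ and $n-1$, but after the $\sigma$-split and viscosity-deviation absorption, the leftover coefficients at those levels are multiples of $\frac{\sqrt{\mu}}{3}-\frac{\vert\nu_j-\overline{\nu}\vert}{\overline{\nu}}$, which vanish when \eqref{ineq:CFL-h2} holds with equality --- precisely the regime the paper's second numerical experiment shows is sharp. So at the boundary of \eqref{ineq:CFL-h2} there is zero slack and your remainder cannot be hidden; the alternative of bounding $\Vert\nabla U^n\Vert\le\Vert\nabla u_{j,h}^{n+1}\Vert+\Vert\nabla w^n\Vert$ and inverting $\Vert\nabla w^n\Vert\le Ch^{-1}\Vert w^n\Vert$ costs a factor $\Delta t^2/h^3$, a strictly stronger CFL condition than \eqref{ineq:CFL-h1}. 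The fix is one line: apply the skew-symmetry compensation in the second argument rather than the third, so the surviving gradient is $\Vert\nabla u_{j,h}^{n+1}\Vert$; with that change your plan coincides with the paper's proof and delivers \eqref{stability_result} with the stated constants.
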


\begin{proof}
See Appendix \ref{proofstab}. 
\end{proof}

\section  {Error Analysis\label{sec:err}}

In this section we derive the numerical error estimate of the proposed ensemble scheme \eqref{Second-Order-h}. We first give a lemma on the estimate of the consistency error of the backward differentiation formula, which will be used in the error analysis for the fully discrete ensemble scheme. 

\begin{lemma}\label{lm} For any $u\in H^3(0,T;L^2(\Omega))$, we have that
\begin{gather}
\Big\Vert \frac{3u^{n+1}-4u^n+u^{n-1}}{2\Delta t}-u_t^{n+1}\Big\Vert^2\leq \frac{5}{2}\Delta t^3 \left(\int_{t^{n-2}}^{t^{n+1}}\Vert u_{ttt}\Vert^2 dt\right).\label{cons1}
\end{gather}
\end{lemma}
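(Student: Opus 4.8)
The plan is to convert the BDF2 difference quotient into $u_t^{n+1}$ plus an explicit remainder using Taylor's theorem with integral remainder, and then to bound that remainder by Minkowski's integral inequality followed by Cauchy--Schwarz in time. Fix $t=t^{n+1}$ and expand $u^n=u(t-\Delta t)$ and $u^{n-1}=u(t-2\Delta t)$ about $t$ to second order with integral remainder, e.g.
\[
u(t-\Delta t) = u(t) - \Delta t\, u_t(t) + \frac{\Delta t^2}{2}u_{tt}(t) + \frac12\int_t^{t-\Delta t}(t-\Delta t-s)^2 u_{ttt}(s)\,ds ,
\]
and similarly with $2\Delta t$ in place of $\Delta t$. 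These expansions are legitimate because $H^3(0,T;L^2(\Omega))\hookrightarrow C^2([0,T];L^2(\Omega))$ and $u_{ttt}\in L^2(0,T;L^2(\Omega))$.

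Next I would form $3u^{n+1}-4u^n+u^{n-1}$ and track the cancellations: the zeroth-order terms cancel ($3-4+1=0$), the $\Delta t^2 u_{tt}(t)$ terms cancel ($-4\cdot\tfrac{\Delta t^2}{2}+\tfrac{(2\Delta t)^2}{2}=0$), while the first-order terms add up to exactly $2\Delta t\,u_t(t)$. Hence, after dividing by $2\Delta t$, the consistency error equals $R/(2\Delta t)$, where the two remainder integrals combine (with the weights $-4$ and $+1$ from the BDF2 coefficients, and after flipping the limits of integration) into
\[
R = 2\int_{t-\Delta t}^{t}(t-\Delta t-s)^2 u_{ttt}(s)\,ds - \frac12\int_{t-2\Delta t}^{t}(t-2\Delta t-s)^2 u_{ttt}(s)\,ds .
\]

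Then I would estimate $\|R\|$ in the spatial $L^2(\Omega)$-norm: split $R$ by the triangle inequality, move the spatial norm inside each time integral via Minkowski's integral inequality, and apply Cauchy--Schwarz in the time variable $s$. The squared Peano kernels integrate cleanly, $\int_{t-\Delta t}^t (t-\Delta t-s)^4\,ds=\Delta t^5/5$ and $\int_{t-2\Delta t}^t (t-2\Delta t-s)^4\,ds=(2\Delta t)^5/5$, producing the $\Delta t^{5/2}$ scaling in each factor. Bounding both local time-integrals of $\|u_{ttt}\|^2$ by the integral over the larger interval (note $[t^{n-1},t^{n+1}]\subseteq[t^{n-2},t^{n+1}]$, so using the stated interval only weakens the bound) and dividing $\|R\|^2$ by $4\Delta t^2$ yields the claimed $\Delta t^3$ factor multiplying $\int_{t^{n-2}}^{t^{n+1}}\|u_{ttt}\|^2\,dt$.

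The only delicate part is the constant bookkeeping: after Cauchy--Schwarz the two contributions carry weights $2$ and $\tfrac12\cdot\sqrt{32}=2\sqrt2$ (times $\Delta t^{5/2}/\sqrt5$), so the aggregate squared constant is $(2+2\sqrt2)^2/5/4$ after the final division, which is comfortably below $5/2$; thus the stated inequality holds with room to spare. Conceptually the key is simply to keep the $L^2(\Omega)$ spatial norm and the temporal integration separate, which is exactly what the Minkowski-then-Cauchy--Schwarz ordering accomplishes, so no genuine obstacle arises beyond the routine arithmetic.
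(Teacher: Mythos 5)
Your proof is correct, and while it follows the same overall strategy as the paper (reduce the BDF2 consistency error to integrals of $u_{ttt}$ against quadratic Peano-type kernels, then apply Cauchy--Schwarz in time), the mechanics differ in a way worth recording. The paper writes $3(u^{n+1}-u^n)-(u^n-u^{n-1})-2\Delta t\,u_t^{n+1}$ via the fundamental theorem of calculus and integrates by parts twice, then estimates the resulting terms with the crude kernel bound $(t-t^n)^2\le\Delta t^2$, landing on the constant $\tfrac52$ over $[t^{n-1},t^{n+1}]$. You instead expand $u^n$ and $u^{n-1}$ about $t^{n+1}$ by Taylor's theorem with integral remainder, so the cancellations $3-4+1=0$ and $-4\cdot\tfrac{\Delta t^2}{2}+\tfrac{(2\Delta t)^2}{2}=0$ produce the remainder directly in Peano form, and then Minkowski plus Cauchy--Schwarz with the exact moments $\int_0^{\Delta t}\tau^4\,d\tau=\Delta t^5/5$ and $\int_0^{2\Delta t}\tau^4\,d\tau=32\Delta t^5/5$ yields the sharper constant $(2+2\sqrt{2})^2/20\approx 1.17<\tfrac52$. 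Your route buys two things. First, the better constant with less case bookkeeping. Second, it is self-verifying: your $R$ reproduces the exact value of the functional on $u=t^3$, namely $-4\Delta t^3$, whereas the paper's displayed integration-by-parts chain silently drops the term $-\Delta t\int_{t^n}^{t^{n+1}}(t-t^n)u_{ttt}\,dt$ in its final simplification (its stated remainder evaluates to only $-\Delta t^3$ on $u=t^3$); the generous factor $\tfrac52$ absorbs the corrected term, so the lemma as stated is unaffected, but your derivation avoids the slip entirely. Finally, as you note, both arguments in fact control the error by the integral over $[t^{n-1},t^{n+1}]$, so enlarging to the stated interval $[t^{n-2},t^{n+1}]$ only weakens the claim, and your appeal to $H^3(0,T;L^2(\Omega))\hookrightarrow C^2([0,T];L^2(\Omega))$ correctly justifies the pointwise values $u_t^{n+1}$, $u_{tt}(t^{n+1})$ used in the expansion.
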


\begin{proof} 
The proof is given in Appendix \ref{prooflem1}.
\end{proof}

Assuming that $X_{h}$ and $Q_{h}$ satisfy the $LBB_{h}$ condition, then the ensemble scheme 
\eqref{Second-Order-h} is equivalent to: \textit{for} $n=1, ..., N-1$, \textit{find}
$u_{j,h}^{n+1} \in V_{h}$ \textit{such that}
\begin{equation}
\label{error-V}
\begin{aligned}
&\hspace{-.5cm}\left(\frac{3u_{j,h}^{n+1}-4u_{j,h}^{n}+u_{j,h}^{n-1}}{2\Delta t},v_{h}\right)+b^{\ast}(\overline{u}_{h}^{n},u_{j,h}^{n+1},v_{h})+\overline{\nu}(\nabla u_{j,h}^{n+1},\nabla
v_{h})\\
&+b^{\ast}(2u_{j,h}^{n}-u_{j,h}^{n-1}-\overline{u}_{h}^{n},2u_{j,h}^{n}-u_{j,h}^{n-1}, v_{h})\\
&+\left(\nu_j-\overline{\nu}\right)(\nabla (2u_{j,h}^{n}-u_{j,h}^{n-1}),\nabla
v_{h})=(f_{j}^{n+1},v_{h})\text{, } \qquad\forall v_{h}\in V_{h}.
\end{aligned}
\end{equation}

\noindent To analyze the rate of convergence of the approximation, we assume the
following regularity assumptions on the NSE
\begin{gather*}
u_{j} \in L^{\infty}\left(0,T;H^{1}(\Omega)\right)\cap H^{1}\left(0,T;H^{k+1}(\Omega)\right)\cap
H^{2}\left(0,T;H^{1}(\Omega)\right),\\
p_{j} \in L^{2}\left(0,T;H^{s+1}(\Omega)\right), \text{and }f_j \in L^{2}%
\left(0,T;L^{2}(\Omega)\right).
\end{gather*}
Let $e_{j}^{n}=u_{j}^{n}-u_{j,h}^{n}$ be the error between the true solution
and the approximate solution. We then have the following error estimates.

\begin{theorem}[Error Estimate]\label{th:err} 
For any $j= 1, \ldots, J$, under the stability conditions of \eqref{ineq:CFL-h1}-\eqref{ineq:CFL-h2} for some $\mu$ and $\epsilon$ satisfying $0\leq\mu<1$ and $0< \epsilon\leq 2-2\sqrt{\mu}$, 
there exists a positive constant $C$ independent of the time step $\Delta t$
such that 
\allowdisplaybreaks
\begin{align}
&\frac{1}{4}\|e_j^{N}\|^{2}+ 2C_0\overline{\nu}\Delta t \|\nabla e_{j}^{N}\|^{2}\nonumber\\
&\leq e^{\frac{CT}{\nu^{3}}}
\bigg\{\frac{1}{4}\left(\|e_{j}^{1}\|^{2}+\Vert 2e_{j}^{1}-e_{j}^{0}\Vert^2\right)+2C_0\overline{\nu}\Delta t \|\nabla e_{j}^{1}\|^{2}\nonumber \\
&\,\,+C_0\overline{\nu} \Delta t \Vert\nabla e_{j}^{0}\Vert
^{2}+ C\overline{\nu}^{-1}h^{2k}\vertiii{ u_j}^4_{4, k+1}+C\overline{\nu}^{-1}h^{2k} \label{ineq:err00}\\
&\,\,+C\Delta t^4\frac{\vert \nu_j -\overline{\nu}\vert^2}{\overline{\nu}}\vertiii{ \nabla u_{j,tt}}^2_{2,0}+C\overline{\nu}^{-1}\Delta t^{4}\vertiii{u_{j,tt}}_{2,0}^{2}+ C\overline{\nu}^{-1}h^{2k}\vertiii{ u_j}^2_{2,k+1}\nonumber\\
&\,\,+C h \Delta t^3 \vertiii{ \nabla u_{j,tt}}^2_{2,0}+C h^{2k+1} \Delta t^3 \vertiii{ \nabla u_{j,tt}}^2_{2,k+1}+C\overline{\nu}^{-1} h^{2s+2}\vertiii{p_{j}}_{2,s+1}^{2}\nonumber\\
&\,\,+C\overline{\nu}^{-1} h^{2k+2}\vertiii{u_{j,t}}_{2,k+1}^{2}+C\overline{\nu}h^{2k}\vertiii{ u_j }^2_{2,k+1}+C\frac{\vert \nu_j - \overline{\nu}\vert^2}{\overline{\nu}}h^{2k}\vertiii{ u_j}^2_{2,k+1}\nonumber\\
&\,\,+C\overline{\nu}^{-1}\Delta t^4\vertiii{ \nabla u_{j,ttt}}^2_{2,0} 
\bigg\}+Ch^{2k+2}\vertiii{ u_{j}}^2_{\infty,k+1}+C\overline{\nu}h^{2k}\Delta t\vertiii{ u_{j}}^2_{\infty,k+1}\text{ ,} \nonumber
\end{align}
where $C_0=\frac{1}{17} \frac{\epsilon} {\sqrt{\mu}+\epsilon}( 1-\frac{\sqrt{\mu}}{2})$.
\end{theorem}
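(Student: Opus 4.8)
The plan is to compare the divergence-free form of the scheme \eqref{error-V} with the continuous weak formulation \eqref{eq:weak} at time $t_{n+1}$, and then to close the argument with a discrete Gronwall inequality built on the same G-stability structure that drives Theorem~\ref{th:stability}. First I would record the equation satisfied by the exact solution: testing \eqref{eq:weak} at $t_{n+1}$ against $v_h\in V_h$, writing the convection in skew-symmetric form, and adding and subtracting the scheme's extrapolated convection $b^\ast(\overline{u}^n,u_j^{n+1},v_h)+b^\ast(2u_j^n-u_j^{n-1}-\overline{u}^n,2u_j^n-u_j^{n-1},v_h)$ together with its split viscosity $\overline{\nu}(\nabla u_j^{n+1},\nabla v_h)+(\nu_j-\overline{\nu})(\nabla(2u_j^n-u_j^{n-1}),\nabla v_h)$, I obtain a variational identity for $u_j^{n+1}$ with a consistency functional $\tau(v_h)$. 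This functional collects three error sources: the backward-difference truncation $\tfrac{3u_j^{n+1}-4u_j^n+u_j^{n-1}}{2\Delta t}-u_{j,t}^{n+1}$, the second-order extrapolation error $u_j^{n+1}-(2u_j^n-u_j^{n-1})$ appearing in the explicitly treated nonlinear and viscosity-deviation terms, and the pressure coupling $(p_j^{n+1},\nabla\cdot v_h)$. Subtracting \eqref{error-V} then yields the error equation for $e_j^{n+1}=u_j^{n+1}-u_{j,h}^{n+1}$.

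Next I would split the error as $e_j^n=\eta_j^n-\phi_{j,h}^n$, where $\eta_j^n=u_j^n-\tilde u_j^n$ is the approximation error of a projection $\tilde u_j^n\in V_h$ of the exact velocity and $\phi_{j,h}^n=u_{j,h}^n-\tilde u_j^n\in V_h$ is the finite element remainder. Working in $V_h$ removes the discrete pressure, while the exact pressure survives only through $(p_j^{n+1}-q_h,\nabla\cdot\phi_{j,h}^{n+1})$ for arbitrary $q_h\in Q_h$, to be bounded by \eqref{interp3} and produce the term $C\overline{\nu}^{-1}h^{2s+2}\vertiii{p_j}_{2,s+1}^2$. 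I would then test the error equation with $v_h=\phi_{j,h}^{n+1}$ and apply the BDF2 polarization identity
\[
\Big(3\phi^{n+1}-4\phi^n+\phi^{n-1},\phi^{n+1}\Big)=\tfrac12\Big(\|\phi^{n+1}\|^2+\|2\phi^{n+1}-\phi^n\|^2-\|\phi^n\|^2-\|2\phi^n-\phi^{n-1}\|^2+\|\phi^{n+1}-2\phi^n+\phi^{n-1}\|^2\Big),
\]
which reproduces exactly the telescoping quantity $\tfrac14(\|\phi^{n}\|^2+\|2\phi^{n}-\phi^{n-1}\|^2)$ and the numerical-dissipation term $\tfrac18\|\phi^{n+1}-2\phi^n+\phi^{n-1}\|^2$ from \eqref{stability_result}, while the main viscous term contributes $\overline{\nu}\Delta t\,\|\nabla\phi_{j,h}^{n+1}\|^2$.

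The core of the work is bounding the right-hand side. The consistency functional is controlled by Lemma~\ref{lm} via \eqref{cons1} for the time-derivative part, giving the $C\overline{\nu}^{-1}\Delta t^4\vertiii{\nabla u_{j,ttt}}_{2,0}^2$ contribution, and by Taylor expansion of the extrapolation error, giving the $C\Delta t^4\tfrac{|\nu_j-\overline{\nu}|^2}{\overline{\nu}}\vertiii{\nabla u_{j,tt}}_{2,0}^2$ and $C\overline{\nu}^{-1}\Delta t^4\vertiii{u_{j,tt}}_{2,0}^2$ terms. The nonlinear differences are estimated with the skew-symmetric bounds \eqref{In1}-\eqref{In2}, Young's inequality, and the approximation properties \eqref{Interp1}-\eqref{interp2}; the dominant contribution of the form $\tfrac{C}{\overline{\nu}^3}\|\nabla u_j^{n+1}\|^4\|\phi_{j,h}^n\|^2$ is the one that feeds the Gronwall inequality and ultimately yields the factor $e^{CT/\nu^3}$. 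The viscosity-deviation interpolation terms produce $C\tfrac{|\nu_j-\overline{\nu}|^2}{\overline{\nu}}h^{2k}\vertiii{u_j}_{2,k+1}^2$, and the remaining interpolation and time-extrapolation mixed terms account for the various $h^{2k}$, $h^{2k+1}\Delta t^3$, $h\Delta t^3$, and $h^{2k+2}$ contributions in \eqref{ineq:err00}.

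The decisive step — and the main obstacle — is the fully explicit fluctuation convection term $b^\ast(2u_{j,h}^n-u_{j,h}^{n-1}-\overline{u}_h^n,\,2u_{j,h}^n-u_{j,h}^{n-1},\,\phi_{j,h}^{n+1})$, whose error contribution must be absorbed into $\overline{\nu}\Delta t\,\|\nabla\phi_{j,h}^{n+1}\|^2$. This is precisely where the inverse inequality \eqref{inverse} and the CFL-type condition \eqref{ineq:CFL-h1} enter, exactly as in the stability proof, and the delicate bookkeeping of the constants $\mu$ and $\epsilon$ is what forces the surviving fraction $C_0=\tfrac{1}{17}\tfrac{\epsilon}{\sqrt{\mu}+\epsilon}(1-\tfrac{\sqrt{\mu}}{2})$ of the viscous coefficient to stay strictly positive. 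Once every term is absorbed or moved to the right, I would sum over $n=1,\dots,N-1$, telescope the G-norm, invoke the discrete Gronwall lemma (licensed because the left-hand viscous coefficient remains bounded below by a positive multiple of $\overline{\nu}\Delta t$), and finally recombine $\phi_{j,h}$ and $\eta_j$ by the triangle inequality to pass from $\phi_{j,h}^N$ back to $e_j^N$, which produces \eqref{ineq:err00}.
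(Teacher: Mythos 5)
Your proposal is correct and follows essentially the same route as the paper's proof in Appendix \ref{proofthm1}: the same decomposition of $e_j^n$ into a $V_h$-projection error plus a discrete remainder (your $\phi_{j,h}^n$ is the paper's $-\xi_{j,h}^n$), the same choice of test function and BDF2 G-stability identity, Lemma \ref{lm} for the consistency term, the pressure treated through $(p_j^{n+1}-q_h,\nabla\cdot\xi_{j,h}^{n+1})$, absorption of the explicitly treated fluctuation convection via skew symmetry, the inverse inequality \eqref{inverse}, and the condition \eqref{ineq:CFL-h1}, followed by summation, the discrete Gronwall inequality yielding $e^{CT/\nu^{3}}$, and a final triangle inequality. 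The only minor imprecision is bookkeeping: the G-stability identity delivers a dissipation coefficient of $\tfrac{1}{4\Delta t}$, which drops to the $\tfrac{1}{8\Delta t}$ you quote only after half of it is spent absorbing the explicit convection term, exactly as in the paper.
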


\begin{proof}
See Appendix \ref{proofthm1}.
\end{proof}

It is seen that when the popular $P^2$-$P^1$ Taylor-Hood FE is used for $X_h$ and $Q_h$, that is, $k=2$ and $s=1$, we have the following optimal convergence results: 
\begin{corollary}
Suppose the $P^2$-$P^1$ Taylor-Hood FE pair is used for the spatial discretization and assume that the initial errors $\|u_j^{0}-u_{j, h}^{0}\|$ and $\|\nabla (u_{j}^{0}-u_{j, h}^{0})\|$ are both at least $O(h^2)$ accurate. Then, the approximation error of the ensemble scheme \eqref{Second-Order-h}  at time $t_N$ satisfies
\begin{equation}
 \frac{1}{4}\|u_j^{N}-u_{j, h}^{N}\|^{2}+ 2C_0\overline{\nu}\Delta t \|\nabla \left(u_{j}^{N}-u_{j, h}^{N}\right)\|^{2}
 \sim \mathcal{O}(h^4+\Delta t^4+h\Delta t^3).
\end{equation}
\end{corollary}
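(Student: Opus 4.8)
The plan is to obtain the Corollary as a direct specialization of Theorem \ref{th:err} followed by a bookkeeping of orders. First I would set $k=2$ and $s=1$ in the error bound \eqref{ineq:err00}, these being exactly the polynomial degrees of the $P^2$-$P^1$ Taylor--Hood pair for which the approximation properties \eqref{Interp1}--\eqref{interp3} were posited. Under the regularity hypotheses used in Theorem \ref{th:err}, every norm of the form $\vertiii{\cdot}$ appearing on the right-hand side is finite and independent of both $h$ and $\Delta t$; moreover, for fixed final time $T$ and fixed viscosities the prefactor $e^{CT/\nu^{3}}$ is a bounded constant. Hence the asymptotic order of each term is determined entirely by its explicit powers of $h$ and $\Delta t$, and the proof reduces to reading off the slowest-decaying contributions.

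The second step is the term-by-term count. With $k=2$, the interpolation and consistency terms carrying $h^{2k}$ (namely the $\vertiii{u_j}^4_{4,k+1}$ term, the $\vertiii{u_j}^2_{2,k+1}$ terms, and the stand-alone $\overline{\nu}^{-1}h^{2k}$ term) all contribute $\mathcal{O}(h^4)$, as does the pressure term, whose factor $h^{2s+2}=h^4$ when $s=1$. The genuinely second-order-in-time terms, i.e. the $\Delta t^4$ contributions stemming from the BDF2 consistency estimate of Lemma \ref{lm} and from the extrapolation of the diffusion and convection terms, give $\mathcal{O}(\Delta t^4)$. The mixed term $Ch\Delta t^3\vertiii{\nabla u_{j,tt}}^2_{2,0}$, which arises from pairing an inverse-inequality factor against a temporal consistency factor, survives as $\mathcal{O}(h\Delta t^3)$. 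Every remaining term is strictly higher order: $h^{2k+1}\Delta t^3=h^5\Delta t^3$, $h^{2k+2}=h^6$, $\overline{\nu}^{-1}h^{2k+2}=h^6$, and $\overline{\nu}h^{2k}\Delta t=h^4\Delta t$ are each dominated by one of $h^4$, $\Delta t^4$, or $h\Delta t^3$ and may therefore be absorbed. Collecting the survivors yields precisely $\mathcal{O}(h^4+\Delta t^4+h\Delta t^3)$.

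The point requiring genuine care, and the part I expect to be the main obstacle, is the treatment of the start-up data, since the right-hand side of \eqref{ineq:err00} still carries the initial-error terms $\|e_j^1\|^2$, $\|2e_j^1-e_j^0\|^2$, $\Delta t\|\nabla e_j^1\|^2$, and $\Delta t\|\nabla e_j^0\|^2$. The hypothesis furnishes $\|e_j^0\|,\|\nabla e_j^0\|=\mathcal{O}(h^2)$, so the $e_j^0$ contributions are $\mathcal{O}(h^4)$ and $\mathcal{O}(\Delta t\,h^4)$, both within the claimed bound. For $e_j^1$ I would invoke the start-up procedure described after \eqref{Second-Order-h}: a single step of the first-order ensemble method (or of a standard one-step scheme applied member by member), started from $\mathcal{O}(h^2)$-accurate data, produces $\|e_j^1\|=\mathcal{O}(\Delta t^2+h^{k+1})=\mathcal{O}(\Delta t^2+h^3)$ and $\|\nabla e_j^1\|=\mathcal{O}(\Delta t^2+h^{k})=\mathcal{O}(\Delta t^2+h^2)$, whence $\|e_j^1\|^2=\mathcal{O}(\Delta t^4+h^6)$ and $\Delta t\|\nabla e_j^1\|^2=\mathcal{O}(\Delta t^5+\Delta t\,h^4)$, again consistent with $\mathcal{O}(h^4+\Delta t^4+h\Delta t^3)$. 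Substituting these start-up estimates together with the order count of the previous paragraph into \eqref{ineq:err00}, and discarding the absorbed higher-order terms, delivers the stated convergence rate.
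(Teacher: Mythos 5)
Your proposal is correct and follows essentially the same route as the paper, which presents the corollary as an immediate specialization of Theorem \ref{th:err} with $k=2$, $s=1$: every surviving term on the right-hand side of \eqref{ineq:err00} is $\mathcal{O}(h^4)$, $\mathcal{O}(\Delta t^4)$, or $\mathcal{O}(h\Delta t^3)$, and the remaining terms ($h^5\Delta t^3$, $h^6$, $h^4\Delta t$) are absorbed. Your explicit treatment of the start-up errors $e_j^1$ via the first-order initialization step is in fact more careful than the paper's, which leaves that point implicit in the hypothesis on the initial errors.
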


\section{Numerical Experiments\label{sec:num}}
The goal of this section is two-fold: (i) to numerically illustrate the convergence rate of the ensemble algorithm \eqref{Second-Order-h}, that is, we show it is second-order accurate in time; (ii) to check the stability of the algorithm, in particular, we show that the stability condition \eqref{ineq:CFL-h2} is sharp.  
\subsection{Convergence Test}
We illustrate the convergence rate of \eqref{Second-Order-h} by considering a test problem for the NSE from \cite{GQ98}, which has an analytical solution. 
The problem preserves spatial patterns of the Green-Taylor
solution \cite{B05,GT37} but the vortices do not decay as $t\rightarrow \infty$. 
On the unit square $\Omega = [0, 1]^2$, we define
\begin{align*}
u_{ref}& =[-s(t)\cos x\sin y, s(t)\sin x\cos y]^\top, \\
p_{ref}& =-\frac{1}{4}[\cos (2x)+\cos (2y)]s^{2}(t),
\end{align*}
with $s(t)=\sin (2t)$ and the corresponding source term is
$$f_{ref}(x,y,t)=\left(s^{\prime }(t)+2\nu s(t)\right)[-\cos x\sin y, \sin x\cos y]^\top.$$ 
The initial condition $u_{ref}^0(x, y) = [0, 0]^\top$ and $u_{ref}^0(x, y)$ satisfies inhomogeneous Dirichlet boundary conditions. 

To check the convergence, we consider an ensemble of two members with distinct viscosity coefficients and perturbed initial conditions. 
For the first member, the viscosity coefficient $\nu_1=0.2$ and the exact solution is chosen as $u_1=(1+\epsilon)u_{ref}$ whereas for the second member we $\nu_2=0.3$ and $u_2= (1-\epsilon)u_{ref}$, where $\epsilon= 10^{-3}$. 
The initial conditions, boundary conditions, and the source terms are adjusted accordingly. 

For this choice of parameters, we have $\vert \nu_j -\overline{\nu}\vert/\overline{\nu} =\frac{1}{5}$ for both $j= 1$ and $j=2$, hence the stability condition \eqref{ineq:CFL-h2} is satisfied. 
We first apply the ensemble algorithm \eqref{Second-Order-h} with the $P^2$-$P^1$ Taylor-Hood FE and evaluate the rate of convergence. 
The initial mesh size and time step size are chosen to be $h=0.1$ and $\Delta t=0.05$; both the spatial and temporal discretization are uniformly refined. 
The numerical results are listed in Table \ref{tab:t5ensemble} for which
$$
\|\mathcal{E}^{E}_j\|_{\infty, 0}= \max_{0\leq n\leq N} \|u_j^n - u_{j, h}^n\|
\text{\qquad and \qquad}
\|\nabla \mathcal{E}^{E}_j\|_{2, 0}= \sqrt{\Delta t \sum_{n=0}^{N} \|u_j^n - u_{j, h}^n\|^2}.
$$
It is seen that the convergence rates for both $u_{1}$ and $u_{2}$ are second order, which matches our theoretical analysis. 
\begin{table}[htp]
\centering
{\small
\caption{\noindent Approximation errors for ensemble simulations of two members with inputs $\nu_1= 0.2$, $u_{1, 0}= (1+10^{-3})u^0$ and $\nu_2= 0.3$, $u_{2, 0}= (1-10^{-3})u^0$.}
\label{tab:t5ensemble}%
\begin{tabular}{|c||c|c||c|c||c|c||c|c|}
\hline
$1/h$& $\|\mathcal{E}^{E}_1\|_{\infty, 0}$ & rate & $\|\nabla \mathcal{E}^{E}_1\|_{2,0}$ & rate & $\|\mathcal{E}^{E}_2\|_{\infty, 0}$ & rate & $\|\nabla \mathcal{E}^{E}_2\|_{2,0}$ & rate\\
\hline
10  & $1.02e-04$ &   --  & $8.51e-04$  &   -- & $8.02e-05$ &   --  & $7.99e-04$  &   -- \\
20  & $2.60e-05$ &   1.98  & $2.12e-04$  &   2.00 & $2.03e-05$ &   1.98  & $1.99e-04$  &   2.00 \\
40  & $6.54e-06$ &   1.99  & $5.31e-05$  &   2.00 & $5.12e-06$ &   1.99  & $4.99e-05$  &   2.00 \\
80  & $1.64e-06$ &   1.99  & $1.33e-05$  &   2.00 & $1.28e-06$ &   2.00  & $1.25e-05$  &   2.00 \\
\hline
\end{tabular}
}
\end{table}

Furthermore, we implement the two individual simulations separately. 
Comparing the ensemble simulation solutions in Table \ref{tab:t5ensemble} with the independent simulation results listed in Table \ref{tab:t5individual}, we observe that the former achieves the same order of accuracy as the latter. 
\begin{table}[htp]
\centering
{\small
\caption{\noindent Approximation errors for two individual simulations: $\nu_1=0.2$, $u_{1, 0}= (1+10^{-3})u^0$,  and $\nu_2=0.3$, $u_{2, 0}= (1-10^{-3})u^0$.}
\label{tab:t5individual}%
\begin{tabular}{|c||c|c||c|c||c|c||c|c|}
\hline
$1/h$& $\|\mathcal{E}^{S}_1\|_{\infty, 0}$ & rate & $\|\nabla \mathcal{E}^{S}_1\|_{2,0}$ & rate & $\|\mathcal{E}^{S}_2\|_{\infty, 0}$ & rate & $\|\nabla \mathcal{E}^{S}_2\|_{2,0}$ & rate\\
\hline
10  & $1.08e-04$ &   --       & $8.79e-04$  &   --     & $7.64e-05$ &   --       & $7.79e-04$  &   -- \\
20  & $2.74e-05$ &   1.98  & $2.20e-04$  &   2.00 & $1.94e-05$ &   1.98  & $1.94e-04$  &   2.00 \\
40  & $6.92e-06$ &   1.99  & $5.50e-05$  &   2.00 & $4.87e-06$ &   1.99  & $4.85e-05$  &   2.00 \\
80  & $1.74e-06$ &   1.99  & $1.38e-05$  &   1.99 & $1.22e-06$ &   2.00  & $1.21e-05$  &   2.00 \\
\hline
\end{tabular}
}
\end{table}

\subsection{Stability tests}
Next, we check the stability of our algorithm by considering the problem of a flow between two offset circles \cite{JL14,J15,JL15,JKL15}. The domain is a disk with a smaller off-center
obstacle inside. Letting $r_{1}=1$, $r_{2}=0.1$, and $c=(c_{1},c_{2})=(\frac{1}{2}
,0)$, the domain is given by
\[
\Omega=\{(x,y)\,\,:\,\,x^{2}+y^{2}\leq r_{1}^{2} \,\,\text{ and }\,\, (x-c_{1})^{2}%
+(y-c_{2})^{2}\geq r_{2}^{2}\}.
\]
\begin{figure}[h]
\begin{center}
\includegraphics[width=.6\textwidth]{./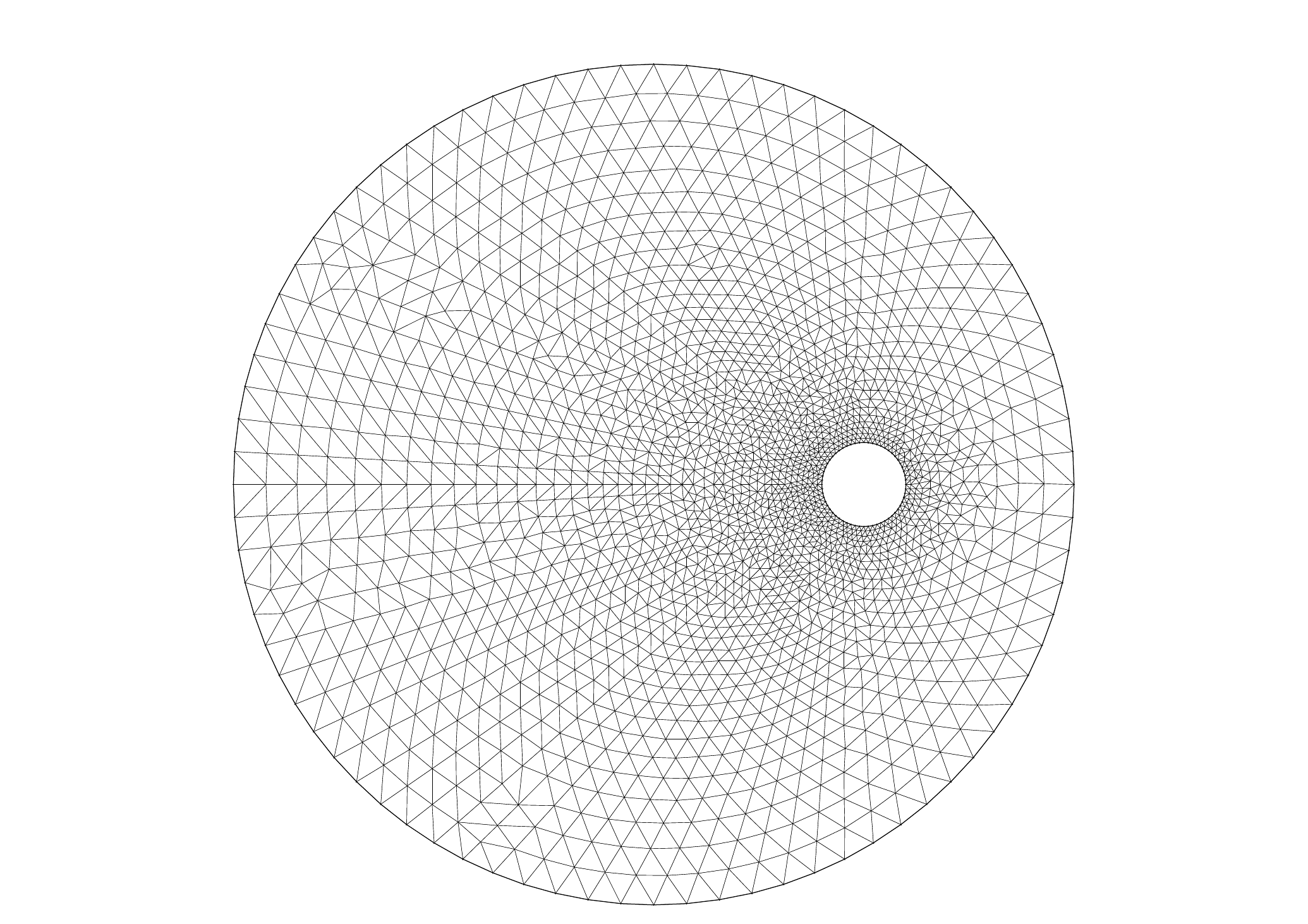}
\end{center}
\caption{Mesh for the flow between two offset cylinders example.}
\label{mesh}
\end{figure}
\noindent The flow is driven by a counterclockwise rotational body force
\[
f(x,y,t)=[-6y(1-x^{2}-y^{2}), 6x(1-x^{2}-y^{2}) ]^\top
\]
with no-slip boundary conditions imposed on both circles. 
A von K$\acute{a}$rm$\acute{a}$n vortex street forms behind the inner circle and then re-interacts with that circle and with itself, generating complex flow patterns.
We consider multiple numerical simulations of the flow with different viscosity coefficients using the ensemble-based algorithm \eqref{Second-Order-h}. 
For spatial discretization, we apply the $P^2$-$P^1$ Taylor-Hood element pair on a triangular mesh that is generated by Delaunay triangulation with $80$ mesh points on the outer circle and $60$ mesh points on the inner circle and with refinement near the inner circle, resulting in $18,638$ degrees of freedom; see Figure \ref{mesh}.

In order to illustrate the stability analysis, we choose three sets of viscosity coefficients:
\begin{itemize}
\item[] Case 1: \quad $\nu_1=0.021$,\,\, $\nu_2=0.030$,\,\, $\nu_3=0.039$;
\item[] Case 2: \quad $\nu_1=0.019$,\,\, $\nu_2=0.030$,\,\, $\nu_3=0.041$;
\item[] Case 3: \quad $\nu_1=0.015$,\,\, $\nu_2=0.0394$,\,\, $\nu_3=0.0356$.
\end{itemize}
The average of the viscosity coefficients is $\overline{\nu}= 0.03$ for all the cases. 
However, the stability condition \eqref{ineq:CFL-h2} does not hold except the first case because 
\begin{itemize}
\item[] Case 1: \quad $\frac{\vert\nu_1-\overline{\nu}\vert}{\overline{\nu}}=\frac{3}{10}$,\,\, $\frac{\vert\nu_2-\overline{\nu}\vert}{\overline{\nu}}=0$,\,\,
$\frac{\vert\nu_3-\overline{\nu}\vert}{\overline{\nu}}=\frac{3}{10}$;
\item[] Case 2: \quad   $\frac{\vert\nu_1-\overline{\nu}\vert}{\overline{\nu}}=\frac{11}{30}$,\,\, $\frac{\vert\nu_2-\overline{\nu}\vert}{\overline{\nu}}=0$,\,\,
$\frac{\vert\nu_3-\overline{\nu}\vert}{\overline{\nu}}=\frac{11}{30}$;
\item[] Case 3: \quad   $\frac{\vert\nu_1-\overline{\nu}\vert}{\overline{\nu}}=\frac{1}{2}$,\,\, $\frac{\vert\nu_2-\overline{\nu}\vert}{\overline{\nu}}=\frac{47}{150}$,\,\,
$\frac{\vert\nu_3-\overline{\nu}\vert}{\overline{\nu}}=\frac{14}{75}$.
\end{itemize}
Among them, the first and third members of Case 2 and the first member of Case 3 have perturbation ratios greater than $\frac{1}{3}$. 
Simulations of all the cases are subject to the same initial condition, boundary condition and body forces for all ensemble members. 
In particular, the initial condition is generated by solving the steady Stokes problem with viscosity $\nu=0.03$ and the same body force $f(x, y, t)$. 
All the simulations are run over the time interval $[0, 5]$ with a time step size $\Delta t= 0.01$. 
For the stability test, we use the kinetic energy as a criterion and compare the ensemble simulation results with independent simulations using the same mesh and time-step size. 

The comparison of the energy evolution of ensemble-based simulations with the corresponding independent simulations is shown in Figures \ref{egy_1}, \ref{egy_2} and \ref{egy_3}. 
It is seen that, for Case 1, the ensemble simulation is stable, but for Cases 2 and 3 it becomes unstable. 
This phenomena coincides with our stability analysis because the condition \eqref{ineq:CFL-h2} holds for all members of Case 1, but does not hold for Cases 2 and 3. 
Indeed, it is observed from Figure \ref{egy_2} that the energy of the third member in Case 2 blows up after $t=1.95$, then affects the other two members and results in their  energy dramatically increasing after $t=2.45$. 
In Case 3, the first member blows up after $t= 4.35$, which influences the other two members and leads to their energy blowing up at $t= 4.9$. 

\begin{figure}[h!]
\begin{center}
\includegraphics[width=.8\textwidth]{./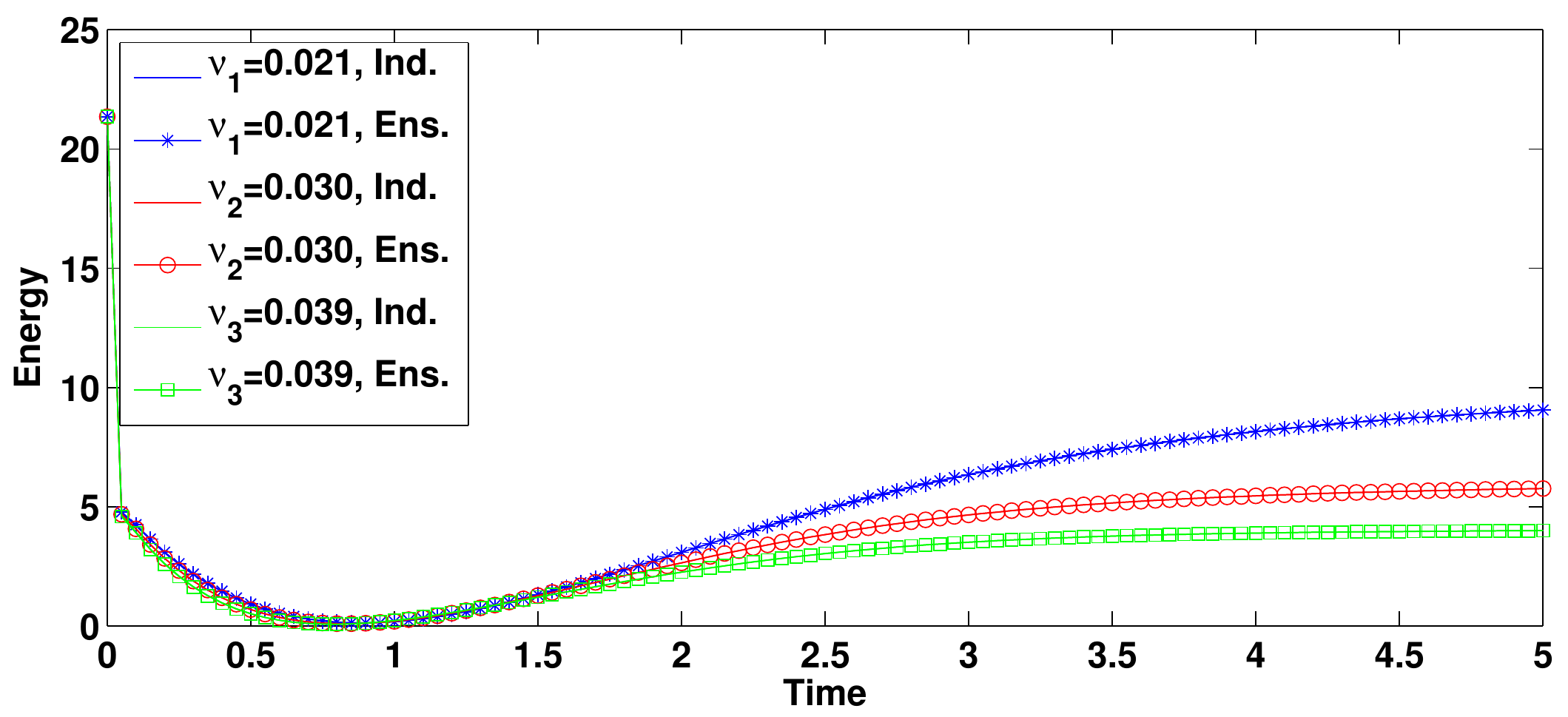}
\end{center}
\caption{For the flow between two offset cylinders, Case 1, the energy evolution of the ensemble (Ens.) and independent simulations (Ind.).}
\label{egy_1}
\end{figure}
\begin{figure}[h!]
\begin{center}
\includegraphics[width=.8\textwidth]{./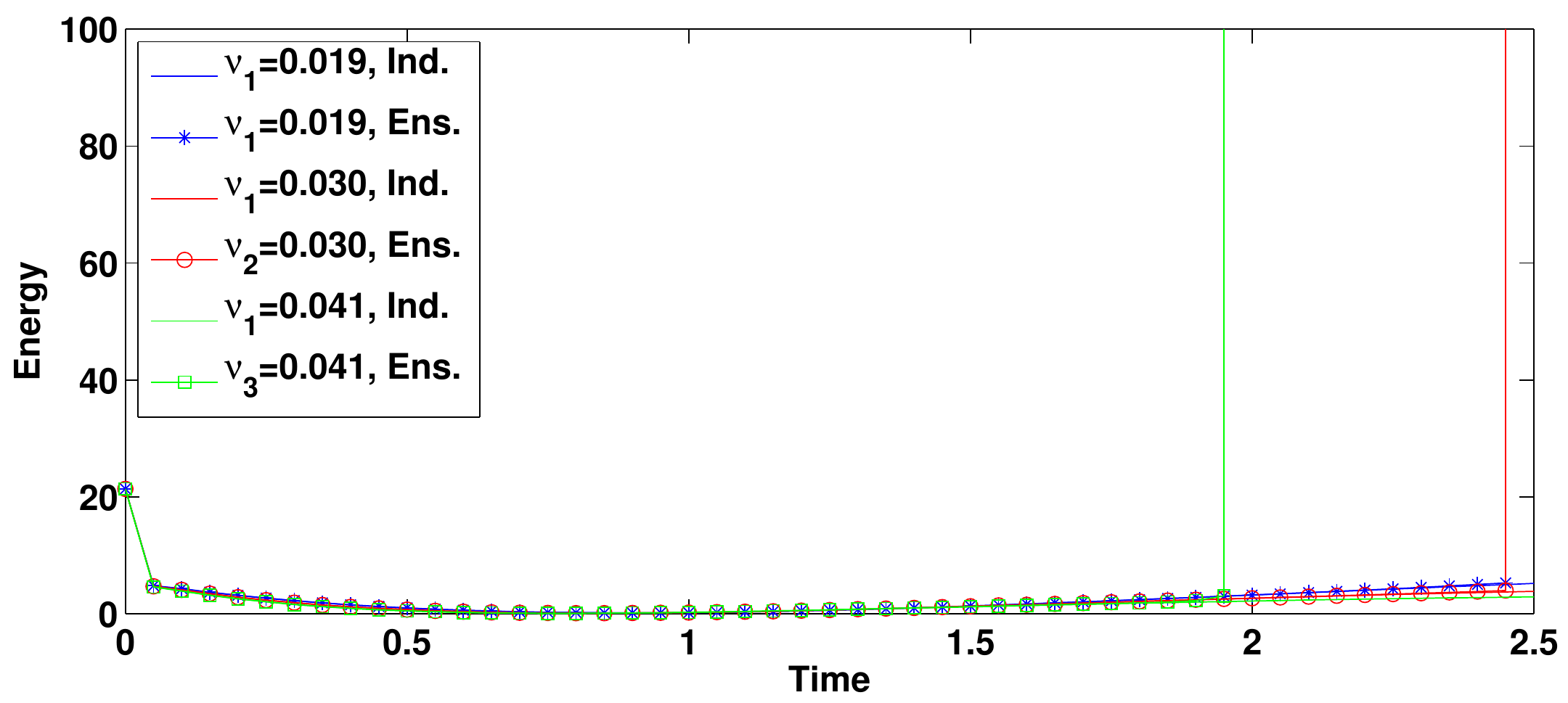}
\end{center}
\caption{For the flow between two offset cylinders, Case 2, the energy evolution of the ensemble (Ens.) and independent simulations (Ind.).}
\label{egy_2}
\end{figure}
\begin{figure}[h!]
\begin{center}
\includegraphics[width=.8\textwidth]{./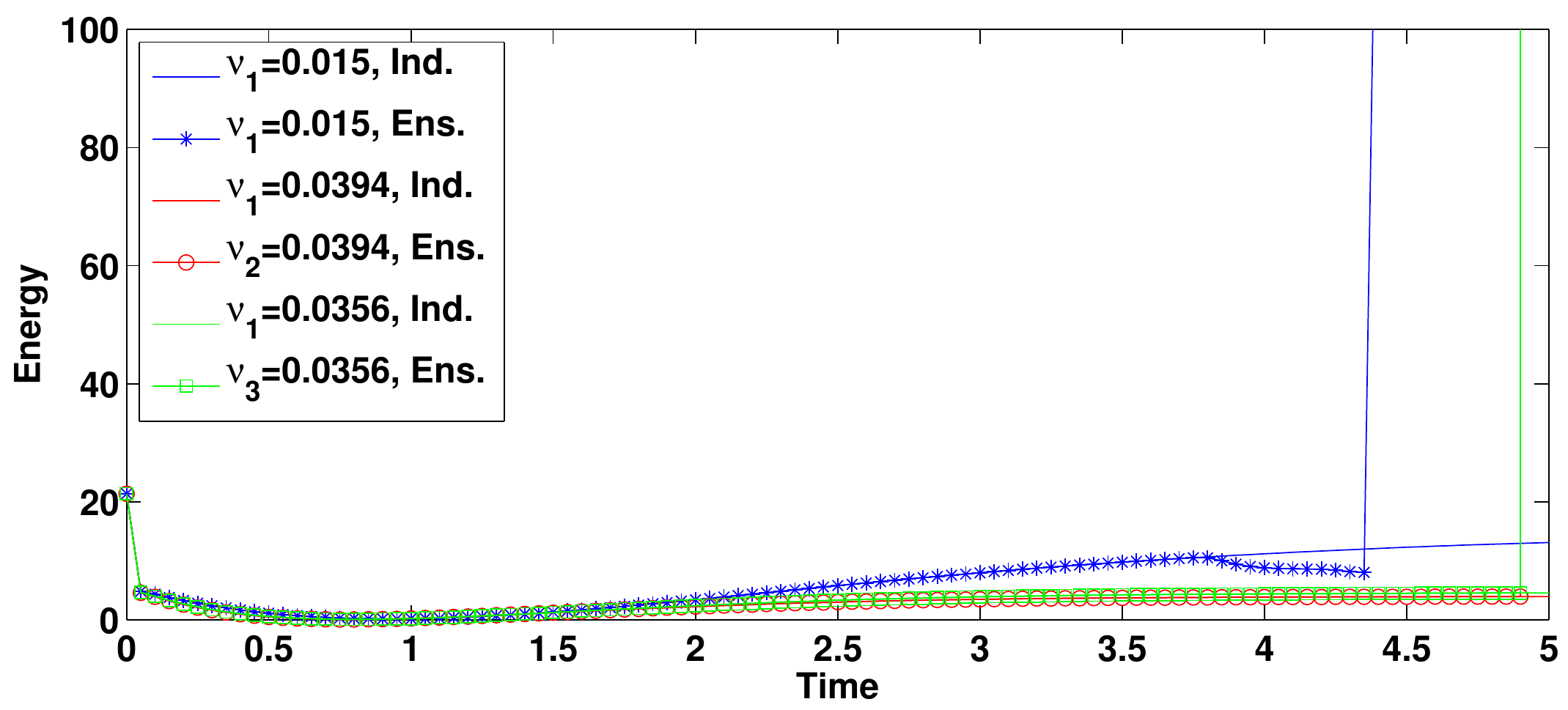}
\end{center}
\caption{For the flow between two offset cylinders, Case 3, the energy evolution of the ensemble (Ens.) and independent simulations (Ind.).}
\label{egy_3}
\end{figure}

\section{Conclusions}
In this paper, we develop a second-order time-stepping ensemble scheme to compute a set of Navier-Stokes equations in which every member is subject to an independent computational setting including a distinct viscosity coefficient, initial condition data, boundary condition data, and/or body force. 
By using the ensemble algorithm, all ensemble members share a common coefficient matrix after  discretization, although with different RHS vectors. 
Therefore, many efficient block iterative solvers such as the block CG and block GMRES can be applied to solve such a single linear system with multiple RHS vectors, 
leading to great savings in both storage and simulation time. 
A rigorous analysis shows the proposed algorithm is conditionally, nonlinearly and long-term stable under two explicit conditions and is second-order accurate in time. 
Two numerical experiments are presented that illustrate our theoretical analysis.
In particular, the first is a test problem having an analytic solution which illustrates that the rate of convergence with respect to time step size is indeed second order, whereas  
the second example is for a flow between two offset cylinders and shows that the stability condition is sharp. 
For future work, we plan to investigate the performance of the ensemble algorithm in data assimilation applications.

\end{document}